\documentclass[12pt]{article}
\usepackage{amssymb,amsmath}
\usepackage{graphicx}
\usepackage{amsfonts}
\usepackage{float}
\usepackage{flafter}
\usepackage{color}

\labelwidth\leftmargini\advance\labelwidth-\labelsep

\def\R{\relax\ifmmode I\!\!R\else$I\!\!R$\fi}

\def\Z{\relax\ifmmode Z\!\!\!Z\else$Z\!\!\!Z$\fi}

\def\C{\relax\ifmmode C\!\!\!\!I\else$C\!\!\!\!I$\fi}

\def\K{\relax\ifmmode I\!\!K\else$I\!\!K$\fi}

\def\N{\relax\ifmmode I\!\!N\else$I\!\!N$\fi}

\newcounter{defcounter}[section]
{\vspace{0.1cm}\begin{sloppypar}\noindent\stepcounter{defcounter}{\bfseries
Definition
      \thesection.\thedefcounter}}%
{\end{sloppypar}\vspace{0.1cm}}

\newtheorem{corollary}{Corollary}[section]

\newtheorem{theorem}{Theorem}[section]

\newtheorem{proposition}{Proposition}[section]

\newcommand{\proof}{{\bf Proof.} }

\newcommand{\qed}{\hfill $\square$}

\begin{document}
\thispagestyle{empty}
\begin{center}
{\Large {\bf Multifractal aspects of $\alpha$-expansions}}
\end{center}
\begin{center}J\"org Neunh\"auserer\\
Technical University of Braunschweig \\
joerg.neunhaeuserer@web.de
\end{center}
\begin{center}
\begin{abstract}
In \cite{[NE]} we introduce $\alpha$-expansions a real numbers in $(0,1]$, given by
\[ \sum_{i=1}^{\infty}(\alpha-1)^{i-1}\alpha^{-(d_{1}+\dots+d_{i})}\]
with $\alpha>1$ and $d_{i}\in\mathbb{N}$ and discuss ergodic theoretical and dimension theoretical aspects of this expansions. In this sequel we study mutifractal aspects of this expansions.  \\
{\bf MSC 2010: 11K55, 28A80}~\\
{\bf Key-words: expansions of real numbers, Hausdorff dimension, multifractal analysis, dimension spectrum}
\end{abstract}
\end{center}
\section{Introduction}
Expansions of real numbers, like $b$-adic expansions and continued fraction expansions, are a central topic in the metric theory of numbers. 
The classical theory studies ergodic theoretical and dimension theoretical aspects of this expansion, see  \cite{[BO],[KIN],[HA],[BE],[EG],[JA]}. Nowadays the multifractal analysis of such expansions is one focus of research. In multifractal analysis dimension spectra of level sets, given by constrains on the sequence of digits, are studied. For example the level set studied are given by the arithmetic mean of digits of given growth rates of digits. We have dimensional theoretical and multifractal results for continued fraction expansions \cite{[FAN]},  $\beta$-expansions \cite{[BA],[BL]}, Cantor series expansions  \cite{[ER],[KIF],[DY]}, Engel-expansions  \cite{[WU],[FANG]}, Lüroth-expansions  \cite{[BAR],[FENG]}, multiple-base expansions \cite{[NE3],[NE3],[LI],[KO]} and others. \\
In \cite{[NE]} we introduce a new family of expansions of real numbers which is given essentially by the expression in the abstract. We call these expansions $\alpha$-expansion and developed the classical theory for them expansions. In this sequel we develop the multifractal analysis of $\alpha$-expansions. Notations and main results are given in the next section. In the following three sections we prove our results. 
\section{Notations and main results} 
Let $\alpha>1$ be a real number. In \cite{[NE]} we have shown that every $x\in (0,1]$ can by uniquely written in the form
\[ \sum_{i=1}^{\infty}(\alpha-1)^{i-1}\alpha^{-(d_{1}(x)+\dots+d_{i}(x))} \]  
with $(d_{i}(x))\in\mathbb{N}^{\mathbb{N}}$. We call this expansion $\alpha$-expansion of $x$. It may generated by the transformation
$T:(0,1]\to(0,1]$ 
\[ T(x)=\frac{\alpha^{i}}{\alpha-1}x-\frac{1}{\alpha-1}\mbox{ for }x\in \left(\frac{1}{\alpha^{i}},\frac{1}{\alpha^{i-1}}\right]\]
for $i\in\mathbb{N}$ via
\[ d_{i}(x)=\lceil-\log_{\alpha}(T^{i-1}(x))\rceil.\]
The local inverses of $T$ given by the family of contractions $T_{i}:(0,1]\to(0,1]$ with 
\[ T_{i}(x)=\frac{\alpha-1}{\alpha^{i}}x+\frac{1}{\alpha^{i}}\]
for $i\in\mathbb{N}$ define an infinit conformal iterated function system on $(0,1]$, see \cite{[MIH]}. 
The cylinder sets of this iterated function system are given by the left-side open intervals
\[ I(d_{1},\dots,d_{n})=\{x\in(0,1]|d_{i}(x)=d_{i},i=1,\dots,n\}\]
with length
\[  |I(d_{1},\dots,d_{n})|= (\alpha-1)^{n}\alpha^{-(d_{1}+\dots+d_{n})}.\]                                                             
Moreover we have
\[\langle d_{i}\rangle=\langle d_{1},d_{2},\dots\rangle:=\lim_{n\to\infty}T_{d_{n}}\circ \dots\circ T_{d_{1}}(1) \]
\[ =\sum_{i=1}^{\infty}(\alpha-1)^{i-1}\alpha^{-(d_{1}+\dots+d_{i})} \]
for a sequences $(d_{i})\in\mathbb{N}^{\mathbb{N}}$. This means $\langle d_{i}(x)\rangle=x$. \\
We have shown in \cite{[NE]} that the Lebesgue-measure is ergodic with respect to $T$. Hence the set
\[ U=\{x|(d_{i}(x))\mbox{ is unbounded}\}\]
has full Lebesgue-measure and by developing the dimension theory of $\alpha$-expansions we have shown that the set       
\[ B=\{x|(d_{i}(x))\mbox{ is bounded}\}\]
has Hausdorff dimension one, also it is of Lebesgue measure zero. In \cite{[NE]}
we missed the following interesting dimensional theoretical result. 
\begin{theorem} We have
\[ \dim_{H}\{x\in(0,1]|\lim_{i\to\infty}d_{i}(x)=\infty\}=0.\]
\end{theorem}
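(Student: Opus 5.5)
Write $E=\{x\in(0,1]\mid \lim_{i\to\infty}d_{i}(x)=\infty\}$. The plan is a direct covering argument by cylinder sets. Fix $N\in\mathbb{N}$ and put
\[ E_{N}=\{x\in(0,1]\mid d_{i}(x)\geq N \mbox{ for all } i>m\}\]
for a given $m$. Then $E\subseteq\bigcup_{m\geq 0}\bigcup_{(d_{1},\dots,d_{m})\in\mathbb{N}^{m}} T_{d_{1}}\circ\dots\circ T_{d_{m}}(F_{N})$, where $F_{N}=\{x\mid d_{i}(x)\geq N \mbox{ for all } i\}$. This is a countable union, and each $T_{d_{1}}\circ\dots\circ T_{d_{m}}$ is an affine similarity. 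Hence, by countable stability of Hausdorff dimension, $\dim_{H}E\leq \dim_{H}F_{N}$ for every $N$. It therefore suffices to show $\dim_{H}F_{N}\to 0$ as $N\to\infty$.

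To bound $\dim_{H}F_{N}$, cover $F_{N}$ at level $n$ by the cylinders $I(d_{1},\dots,d_{n})$ with all $d_{i}\geq N$. These have length $(\alpha-1)^{n}\alpha^{-(d_{1}+\dots+d_{n})}\leq (\alpha-1)^{n}\alpha^{-nN}$, which tends to $0$ as $n\to\infty$ once $N$ is large, since $(\alpha-1)\alpha^{-N}<1$. For $s>0$ the $s$-dimensional sum of this cover is
\[ \sum_{d_{1},\dots,d_{n}\geq N}(\alpha-1)^{ns}\alpha^{-s(d_{1}+\dots+d_{n})}=\Big((\alpha-1)^{s}\sum_{d\geq N}\alpha^{-sd}\Big)^{n}=\Big(\frac{(\alpha-1)^{s}\alpha^{-sN}}{1-\alpha^{-s}}\Big)^{n}.\]
For any fixed $s>0$ the base is $<1$ when $N$ is large enough. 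Then $\mathcal{H}^{s}(F_{N})=0$, so $\dim_{H}F_{N}\leq s$. Letting $N\to\infty$ and then $s\to 0$ gives $\dim_{H}E=0$.

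Two points need care, and I expect the cover of $F_{N}$ to be the main obstacle, though it is mild. First, the cylinders must actually cover $F_{N}$. This holds by the uniqueness of the $\alpha$-expansion and the definition of $I(d_{1},\dots,d_{n})$ as the set of points with the prescribed first $n$ digits. Second, the cover must be legitimate for Hausdorff measure, i.e. all diameters must go to $0$ uniformly. The length formula $|I(d_{1},\dots,d_{n})|=(\alpha-1)^{n}\alpha^{-(d_{1}+\dots+d_{n})}$ settles this.

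A subtle case is $\alpha\geq 2$, where $\alpha-1\geq1$. There one must check that the diameters still shrink. They do: since $\alpha-1<\alpha$, each factor $(\alpha-1)\alpha^{-d}\leq(\alpha-1)/\alpha<1$. Equivalently, one may simply note that the geometric-series base above tends to $0$ as $N\to\infty$ for each fixed $s$.
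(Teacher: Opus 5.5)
Your proposal is correct and follows the paper's proof. The paper likewise covers the set by countably many affine copies $T_{s_1}\circ\dots\circ T_{s_N}(A_M)$ of $A_M=\{x\mid d_i(x)\ge M\ \forall i\}$ and uses countable stability to get $\dim_H A\le\dim_H A_M$; the only difference is that it obtains $\dim_H A_M\to 0$ by quoting Theorem 4.1 of \cite{[NE]} (where $\dim_H A_M$ solves $\sum_{i\ge M}((\alpha-1)/\alpha^{i})^{D}=1$), whereas your level-$n$ cylinder cover proves directly the upper bound that is needed, via the same series condition $(\alpha-1)^s\sum_{d\ge N}\alpha^{-sd}<1$.
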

This theorem is opposed to the result of Good \cite{[GO]}, who proved that the set has Hausdorff dimension $1/2$ for the continued fractions expansion of irrational real numbers in $(0,1]$. The reader will find the proof of theorem 2.1 in the next section. \\
For $\beta >1$ we now set\[ M_{\beta}=\{x\in(0,1]|\lim_{k\to\infty}\frac{1}{k}\sum_{i=1}^{k}d_{i}(x)=\beta\}.\]
Using the ergodic theorem we proved in \cite{[NE]} that $M_{\alpha/(1-\alpha)}$ has full Lebesgue-measure. Using techniques developed in \cite{[FAN],[HA],[WANG]}, we find here continuous dimension spectrum.
 \begin{theorem} For all $\beta>1$ we have
\[\dim_{H}M_{\beta}=\frac{(1-\beta)\log(\beta-1)+\beta \log(\beta)}{-\log(\alpha-1)+\beta \log(\alpha)}\]  
\end{theorem}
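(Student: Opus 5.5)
The formula suggests treating $\dim_H M_\beta$ as an entropy divided by a Lyapunov exponent. The natural candidate measure is the Bernoulli measure whose digits are geometrically distributed with mean $\beta$. So I would set
\[ p_k=\frac{1}{\beta}\Big(1-\frac{1}{\beta}\Big)^{k-1},\qquad k\in\mathbb{N}, \]
which satisfies $\sum_k k p_k=\beta$. A direct computation gives its entropy
\[ h_\beta=-\sum_k p_k\log p_k=\beta\log\beta-(\beta-1)\log(\beta-1). \]
Its Lyapunov exponent with respect to the cylinder lengths $|I(d_1,\dots,d_n)|=(\alpha-1)^n\alpha^{-(d_1+\dots+d_n)}$ is
\[ \lambda_\beta=-\log(\alpha-1)+\beta\log\alpha. \]
The claimed value is exactly $h_\beta/\lambda_\beta$. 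I would prove the upper bound by a covering argument and the lower bound with the mass distribution principle applied to the projection of this Bernoulli measure.

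\emph{Upper bound.} Fix $\varepsilon>0$. For every $x\in M_\beta$ and all large $n$ we have $d_1(x)+\dots+d_n(x)\in[n(\beta-\varepsilon),n(\beta+\varepsilon)]$. Hence, for every $N$,
\[ M_\beta\subset\bigcup_{n\ge N}\ \bigcup_{s=\lceil n(\beta-\varepsilon)\rceil}^{\lfloor n(\beta+\varepsilon)\rfloor}\ \bigcup_{d_1+\dots+d_n=s} I(d_1,\dots,d_n). \]
There are $\binom{s-1}{n-1}$ cylinders with digit sum $s$, each of length $(\alpha-1)^n\alpha^{-s}$. By Stirling's formula, $\binom{s-1}{n-1}\le \exp\big(n(h_\beta+\eta(\varepsilon))\big)$ uniformly for $s$ in the window, where $\eta(\varepsilon)\to0$. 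Therefore
\[ \sum_{s}\binom{s-1}{n-1}|I|^t\le n(2\varepsilon+1)\exp\big(n[h_\beta+\eta(\varepsilon)-t(\lambda_\beta-\varepsilon')]\big). \]
This is summable in $n$ once $t>(h_\beta+\eta)/(\lambda_\beta-\varepsilon')$. Consequently the $t$-dimensional Hausdorff measure of $M_\beta$ is zero. Letting $\varepsilon\to0$ gives $\dim_H M_\beta\le h_\beta/\lambda_\beta$.

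\emph{Lower bound.} Let $\mu$ be the image of the product measure $\prod p$ under $(d_i)\mapsto\langle d_i\rangle$. By the strong law of large numbers, $\mu(M_\beta)=1$. Moreover, $\mu$-almost surely
\[ \frac{\log\mu(I(d_1,\dots,d_n))}{\log|I(d_1,\dots,d_n)|}=\frac{\frac1n\sum_{i\le n}\log p_{d_i}}{\log(\alpha-1)-\frac1n\sum_{i\le n} d_i\,\log\alpha}\to\frac{h_\beta}{\lambda_\beta}. \]
It remains to pass from cylinders to balls. This is the step I expect to be the main obstacle, because the digits are unbounded and neighbouring cylinders need not have comparable length.

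The plan for this step is as follows. Consecutive children $I(d_1,\dots,d_n,k)$ and $I(d_1,\dots,d_n,k+1)$ have length ratio $\alpha$. Also, $\mu$ almost surely has $d_{n+1}(x)=o(n)$, since $\sum_n\Pr(d_{n+1}>\varepsilon n)<\infty$ by the geometric tail. So for $r$ with $|I_{n+1}(x)|\le r<|I_n(x)|$, the ball $B(x,r)$ meets only cylinders of generation $n+1$ lying in $I_n(x)$ or in the one or two neighbouring level-$n$ cylinders. Their total $\mu$-mass can be bounded by $\mu(I_n)e^{o(n)}$, and $r\ge|I_n(x)|e^{-o(n)}$ because $d_{n+1}=o(n)$. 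This yields $\liminf_{r\to0}\log\mu(B(x,r))/\log r\ge h_\beta/\lambda_\beta$ $\mu$-almost surely. The mass distribution principle (Billingsley's lemma) then gives $\dim_H M_\beta\ge h_\beta/\lambda_\beta$.

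The delicate point is the control of neighbouring level-$n$ cylinders, which may be much longer or much shorter than $I_n(x)$. I would handle it by noting that the $\mu$-mass of any interval of length $r$ is at most $\sum$ of masses of cylinders of length $\ge r e^{-o(n)}$ meeting it. I would then use the explicit product form of $\mu$ together with the geometric tails of $p$ to bound this sum.
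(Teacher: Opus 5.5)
Your overall strategy is sound and your upper bound is complete, but the cylinders-to-balls step in your lower bound rests on a false claim. It can be repaired, and the paper itself never addresses this step.

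\textbf{Comparison with the paper.} The paper never counts cylinders. It computes the pressure of $-t\log|T'|+q\,d_1$, picks $(t(\beta),q(\beta))$ with $P=q\beta$ and $\partial P/\partial q=\beta$, and applies Billingsley's lemma to the Gibbs measure $\mu_{t,q}$. Because the potential depends only on $d_1$, that Gibbs measure is Bernoulli with weights proportional to $(\alpha-1)^t e^{(q-t\log\alpha)k}$. The choice $q(\beta)=t\log\alpha+\log((\beta-1)/\beta)$ makes these exactly your $p_k=\frac1\beta(1-\frac1\beta)^{k-1}$. So for the lower bound you use the same measure as the paper; you simply reach it without the thermodynamic formalism.

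The paper gets the upper bound from one observation you can use too. Since $\log p_k$ is affine in $k$, both $\frac1n\log\mu(I_n(x))$ and $\frac1n\log|I_n(x)|$ depend only on $\frac1n\sum_{i\le n}d_i(x)$. Hence the cylinder ratio tends to $h_\beta/\lambda_\beta$ at \emph{every} $x\in M_\beta$, not just $\mu$-a.e. The upper half of Billingsley's lemma works with cylinders directly, since they are nested or disjoint. Your Stirling count is therefore a correct, measure-free alternative rather than a needed step.

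\textbf{The flawed step.} It is not true that for $|I_{n+1}(x)|\le r<|I_n(x)|$ the ball $B(x,r)$ meets only $I_n(x)$ and one or two neighbouring level-$n$ cylinders.
\begin{itemize}
\item The left siblings $I(d_1,\dots,d_{n-1},k)$, $k>d_n$, have total length $|I_n|/(\alpha-1)$. So for $\alpha>2$ and $x$ near the left end of $I_n$, the ball can swallow all of them and leave $I_{n-1}$.
\item Whenever $d_{n+1}(x)=1$, the ball reaches the right endpoint of $I_n$. If also $d_n=1$, that endpoint is where the level-$n$ children of the cylinder to the right of $I_{n-1}$ accumulate. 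The ball then meets infinitely many tiny level-$n$ cylinders, and this happens infinitely often $\mu$-a.s.
\end{itemize}
Your fallback does not repair this as stated. A cylinder of length $\ge re^{-o(n)}$ meeting the ball may be far longer than $r$ and only partly inside it, so its full mass is not an acceptable bound.

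\textbf{How to close it.} Compare covers instead of balls: every interval $U$ can be covered by cylinders $J$ with $|J|\le C|U|$ and $\sum|J|^s\le C_s|U|^s$. Take the smallest cylinder $I_m\supset U$; then:
\begin{itemize}
\item the children of $I_m$ lying inside $U$ have lengths $|I_m|(\alpha-1)\alpha^{-k}\le|U|$, a geometric sequence;
\item $U$ meets the rightmost child it touches in a left-end neighbourhood, contained in the union of grandchildren with digits $\ge K$, again geometrically decaying;
\item $U$ meets the leftmost child in a right-end neighbourhood, contained in a single cylinder $I(\dots,1,\dots,1)$ of length at most $\frac{\alpha}{\alpha-1}|U|$.
\end{itemize}
Hence Hausdorff dimension may be computed with cylinder covers. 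Your $\mu$-a.e. cylinder estimate plus the mass distribution principle for cylinders then gives $\dim_H M_\beta\ge h_\beta/\lambda_\beta$. Apply it on a positive-measure subset where the estimate is uniform and the first few digits are bounded. The paper applies Billingsley's lemma to the cylinders $I(d_1,\dots,d_n)$ without justifying this point, so its proof needs the same comparison.
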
 
We define the Khintchine-spectrum $\kappa:(1,\infty)\to \mathbb{R}$ by
\[ \kappa(x)=\dim_{H}M_{x}= \frac{(1-x)\log(x-1)+x\log(x)}{-\log(\alpha-1)+x\log(\alpha)}.                  \]
Using elementary analysis theorem 2.3 implies the following corollary.
\begin{corollary}
$k$ is a positive differentiable unimodal function with maximum $(\alpha/(1-\alpha),1)$ and asymptotic
\[ \lim_{x\to 0}\kappa(x)=\lim_{x\to \infty}\kappa(x)=0.\]
\end{corollary}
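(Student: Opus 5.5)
We prove the corollary by elementary calculus on the explicit formula for $\kappa$ obtained in Theorem 2.2. Throughout we read the maximum point as $(\alpha/(\alpha-1),1)$, the Lebesgue-typical digit mean, and the left limit as $x\to 1$, the left end of the domain $(1,\infty)$. Write $\kappa=h/L$ with
\[ h(x)=x\log x-(x-1)\log(x-1),\qquad L(x)=x\log\alpha-\log(\alpha-1). \]

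\emph{Positivity, differentiability and limits.} First, $L(x)\ge \log(\alpha/(\alpha-1))+(x-1)\log\alpha>0$ on $(1,\infty)$. Second, $h'(x)=\log x-\log(x-1)>0$ and $h(x)\to 0$ as $x\to 1$ (because $(x-1)\log(x-1)\to 0$), so $h>0$ on $(1,\infty)$. Hence $\kappa$ is positive and differentiable, indeed smooth, on $(1,\infty)$. As $x\to1$ we have $h\to0$ while $L\to\log(\alpha/(\alpha-1))>0$, so $\kappa\to0$. As $x\to\infty$ we write $h(x)=\log x+(x-1)\log\frac{x}{x-1}=\log x+1+o(1)$, whereas $L(x)\sim x\log\alpha$, so again $\kappa\to 0$.

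\emph{Maximum value via Gibbs' inequality.} For $x>1$ let $p_i=\frac1x(1-\frac1x)^{i-1}$, the geometric distribution on $\mathbb{N}$ with mean $x$; a direct computation shows its entropy is $-\sum p_i\log p_i=h(x)$. Let $q_i=|I(i)|=(\alpha-1)\alpha^{-i}$, which is a probability vector, and note $-\log q_i=i\log\alpha-\log(\alpha-1)$, so $\sum p_i(-\log q_i)=L(x)$. Gibbs' inequality gives $h(x)\le L(x)$, with equality iff $p=q$. The distribution $q$ is geometric with mean $\alpha/(\alpha-1)$, so equality holds iff $x=\alpha/(\alpha-1)$. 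Thus $\kappa\le1$, with $\kappa=1$ exactly at $x_0=\alpha/(\alpha-1)$.

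\emph{Unimodality.} The sign of $\kappa'$ is the sign of $g=h'L-hL'$. Since $L''=0$, we get $g'=h''L$. Moreover $h''(x)=\frac1x-\frac1{x-1}<0$ and $L>0$, so $g$ is strictly decreasing and has at most one zero. Because $\kappa$ attains an interior maximum at $x_0$, we have $g(x_0)=0$. Hence $g>0$ on $(1,x_0)$ and $g<0$ on $(x_0,\infty)$, so $\kappa$ strictly increases and then strictly decreases.

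The only non-routine step is identifying the maximum point and the value $1$. The Gibbs-inequality interpretation, with $h$ as an entropy and $L$ as a Lyapunov exponent, handles this without solving the transcendental equation $g(x)=0$ directly.
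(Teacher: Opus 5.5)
Your proof is correct and carries out exactly the ``elementary analysis'' of the closed form from Theorem 2.2 that the paper invokes without details. The Gibbs-inequality comparison $h(x)\le L(x)$, with equality only at $x=\alpha/(\alpha-1)$, pins down the maximum point and the value $1$ without solving $h'L-hL'=0$, and $g'=h''L<0$ then yields unimodality; your readings of the misprints (maximum at $(\alpha/(\alpha-1),1)$, left limit $x\to1^{+}$, Theorem 2.2 in place of 2.3) are the right ones.
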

\begin{figure}
\vspace{0pt}\hspace{0pt}\scalebox{0.42}{\includegraphics{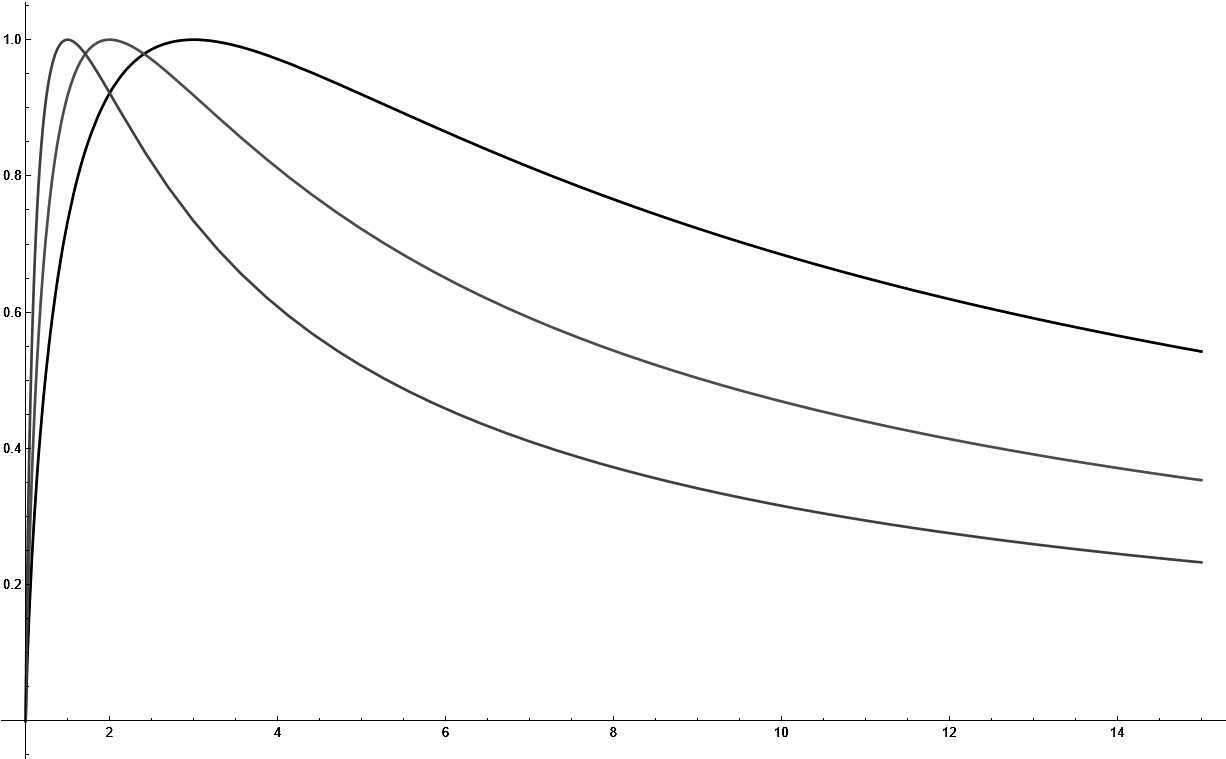}}
\caption{The Khintchine-spectrum $\kappa$ for $\alpha\in\{3/2,2,3\}$ }
\end{figure}
We display $\kappa$ for $\alpha\in\{3/2,2,3\}$ in figure 1.\\
In the last section of the paper we construct a multifractal spectrum in set $U=\{x|(d_{i}(x))\mbox{ is unbounded}\}$ using given increasing subsequences of digits. 
\begin{theorem}
Let $(m_{i})$ be a strictly increasing sequence of natural numbers such that $I=\{m_{i}|i\in\mathbb{N}\}$ has an infinite complement $\hat I=\mathbb{N}\backslash I$ and let $f:I\to\mathbb{N}$ be an increasing function. For $n\in\mathbb{N}$ choose $k_{n}$ such that   
\[ |\hat I\cap\{1,\dots k_{n}\}|=n. \]
If 
\[ \lim_{n\to\infty}\frac{k_{n}}{n}=1\quad\mbox{ and }\quad \lim_{n\to\infty}\frac{1}{n}\sum_{i=1}^{k_{n}-n}f(m_{i})=\mu\]
than
\[\dim_{H}\{x\in(0,1]|d_{m_{i}}(x)=f(i)~\forall i\in \mathbb{N}\}=d, \]
where $d\in (0,1)$ is the solution of 
\[\left(\frac{(a-1)}{\alpha^{\mu}}\right)^{d} -\alpha^{d}+1=0.\]    
\end{theorem}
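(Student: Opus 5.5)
The plan is to compare the set
\[E=\{x\in(0,1]\,|\,d_{m_i}(x)=f(i)\ \forall i\in\mathbb{N}\}\]
with the self-similar limit set generated by the "free" digits at the positions in $\hat I$. Write $S_n=\sum_{i=1}^{k_n-n}f(m_i)$ for the total of the prescribed digits among the first $k_n$ positions. A cylinder $I(d_1,\dots,d_{k_n})$ meeting $E$ has its $k_n-n$ digits in $I$ fixed and its $n$ digits in $\hat I$ free, say $j_1,\dots,j_n$. By the length formula of Section 2,
\[|I(d_1,\dots,d_{k_n})|=(\alpha-1)^{k_n-n}\,\alpha^{-S_n}\prod_{l=1}^{n}(\alpha-1)\alpha^{-j_l}.\]
Since $k_n/n\to1$, the factor $(\alpha-1)^{k_n-n}$ is $e^{o(n)}$. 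Since $S_n/n\to\mu$, the factor $\alpha^{-S_n}$ equals $\alpha^{-\mu n+o(n)}$. So each free digit $j$ effectively contributes a contraction ratio $(\alpha-1)\alpha^{-j-\mu}$. The associated pressure equation
\[\sum_{j\ge1}\bigl((\alpha-1)\alpha^{-j-\mu}\bigr)^{s}=1\]
sums to
\[\Bigl(\frac{\alpha-1}{\alpha^{\mu}}\Bigr)^{s}\frac{1}{\alpha^{s}-1}=1,\]
which is exactly the equation defining $d$ in the statement. The function $\varphi(s)=\sum_j((\alpha-1)\alpha^{-j-\mu})^s$ is strictly decreasing, with $\varphi(s)\to\infty$ as $s\to0$ and $\varphi(1)<1$, so $d\in(0,1)$ exists and is unique.

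\emph{Upper bound.} For $s>d$, cover $E$ by all level-$k_n$ cylinders meeting $E$. Their diameters are at most $((\alpha-1)/\alpha)^{k_n}\to0$ when $\alpha<2$; for $\alpha\ge2$, use instead the factor $\alpha^{-S_n}\prod(\alpha-1)\alpha^{-j_l}\le((\alpha-1)/\alpha)^n$, which also tends to $0$. The $s$-sum factorizes:
\[\sum|I|^s=(\alpha-1)^{s(k_n-n)}\alpha^{-sS_n}\Bigl(\sum_{j}\bigl((\alpha-1)\alpha^{-j}\bigr)^s\Bigr)^{n}=\exp\bigl(n[\log\varphi(s)+o(1)]\bigr).\]
Since $\varphi(s)<1$, this tends to $0$, giving $\dim_H E\le d$.

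\emph{Lower bound.} I would use the mass distribution principle.
\begin{itemize}
\item Fix $N$ and restrict the free digits to $\{1,\dots,N\}$. Let $d_N$ solve $\sum_{j\le N}((\alpha-1)\alpha^{-j-\mu})^{s}=1$; then $d_N\uparrow d$.
\item Put the weights $p_j=((\alpha-1)\alpha^{-j-\mu})^{d_N}$ independently on the free positions. This defines a probability measure $\nu$ on $E$, where the fixed positions carry mass one.
\item For a cylinder of any level $k$ with $k_n\le k<k_{n+1}$, compute $\log\nu(I)/\log|I|$. Using $S_n=\mu n+o(n)$ and $k_n-n=o(n)$, this ratio equals $d_N+o(1)$ uniformly.
\end{itemize}
The point to check here is that a single prescribed digit cannot distort the ratio. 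This holds because $S_{n}-S_{n-1}=o(n)$, since $S_n/n$ converges, so any one prescribed digit contributes only $o(n)$ to the exponent.

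\emph{Passing from cylinders to balls.} This is the step I expect to be the main obstacle. Because $f$ may be unbounded, a cylinder can be much longer than its children at a fixed position, so cylinders and balls are not automatically comparable. I would handle it as follows.
\begin{itemize}
\item Given a ball $B(x,r)$, take the largest level $k$ at which some cylinder of $E$ meeting the ball has length at least $r$.
\item The cylinders of the IFS are abutting intervals, and the free digits are bounded by $N$. Hence the ball meets at most a bounded number of level-$k$ cylinders of length comparable to $r$ up to a factor $e^{o(n)}$; again this uses that the jump in $S$ at one step is $o(n)$.
\item Therefore $\nu(B(x,r))\le r^{d_N-\varepsilon}$ for small $r$.
\end{itemize}
The mass distribution principle then gives $\dim_H E\ge d_N-\varepsilon$. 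Letting $\varepsilon\to0$ and $N\to\infty$ yields $\dim_H E\ge d$, which combined with the upper bound gives $\dim_H E=d$.
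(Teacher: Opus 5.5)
Your proposal is correct in outline, and its lower bound is the paper's own argument. The paper also restricts the free digits to $\{1,\dots,M\}$ (the set $B_M$) and puts independent weights $\bigl((\alpha-1)\alpha^{-j-\mu}\bigr)^{d_M}$ on the free positions. It shows that $\log(\text{mass})/\log(\text{length})$ of the cylinders $\hat I(d_1,\dots,d_n)$ tends to $d_M$, using $k_n/n\to1$ and $S_n/n\to\mu$, and then lets $M\to\infty$ (Proposition 5.1 and Corollary 5.1).

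The genuine difference is your upper bound, which the paper never proves. It only obtains $\dim_H B_M=d_M\to d$. Since $B_M\subseteq E$, this gives $\dim_H E\ge d$ alone. But $E$ is strictly larger than $\bigcup_M B_M$, because the free digits of points of $E$ may be unbounded, so countable stability does not close the argument. Your covering of $E$ by all level-$k_n$ cylinders, with the factorized sum $\exp(n[\log\varphi(s)+o(1)])$, supplies exactly the missing inequality $\dim_H E\le d$. Similarly, the paper applies Billingsley's lemma to the cylinders at the levels $k_n$ only. Your treatment of the intermediate levels $k_n\le k<k_{n+1}$ (via $S_{n+1}-S_n=o(n)$) and of balls is what that step silently needs.

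One caution on the ball step you flag: the reason you give does not carry it. The fact that cylinders abut works against you. Two level-$k$ cylinders sharing an endpoint can differ in length by an arbitrarily large factor: on either side of the common endpoint of $I(w,2)$ and $I(w,1)$ sit $I(w,2,1,\dots,1)$ and cylinders $I(w,1,l,\dots)$ with $l$ large. Even with digits $\le N$, two level-$k$ support cylinders met by the same ball can a priori differ by a factor $\alpha^{(N-1)\ell}$, where $\ell$ is the number of free positions below their common ancestor.

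Your claim is nevertheless true, for the following reason. After the restriction, every cylinder $C$ has an empty left margin of length at least $\alpha^{-g}|C|$, where $g=\max(N,f)$ and $f=o(n)$ is the next prescribed digit. A ball meeting the support in two different children of $C$ must either contain a whole child or cross the left margin of one of them. This forces $|C|\le re^{o(n)}$, and hence $\ell=o(n)$.

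A cleaner fix is the stopping-time cover by cylinders $J$ with $|J|<r\le|\mathrm{parent}(J)|$:
\begin{itemize}
\item they are pairwise disjoint;
\item each has length at least $r(\alpha-1)\alpha^{-g}=re^{-o(n)}$, where $n\asymp\log(1/r)$;
\item hence at most $r^{-o(1)}$ of them meet $B(x,r)$, and each has mass at most $r^{d_N-\varepsilon}$.
\end{itemize}

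Two minor points. The case split $\alpha<2$, $\alpha\ge2$ is unnecessary, since $(\alpha-1)\alpha^{-d_i}\le(\alpha-1)/\alpha<1$ for every $\alpha>1$. Also $\varphi(1)=\alpha^{-\mu}$, so $\varphi(1)<1$ requires $\mu>0$.
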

Let us consider an example. Choose a sequence $(m_{i})$ of natural numbers such that  
$ k_{n}=n+\lceil\sqrt{2\mu n}\rceil$ and $f(m_{i})=i$. We have 
 \[\lim_{n\to\infty}\frac{1}{n}\sum_{i=1}^{k_{n}-n}f(m_{i})=\lim_{n\to\infty}\frac{1}{n}\sum_{i=1}^{k_{n}-n}i
 =\lim_{n\to\infty}\frac{(k_{n}-n)^2+(k_{n}-n)}{2n}=\mu.\]
Hence
\[\dim_{H}\{x\in(0,1]|d_{m_{i}}(x)=i~\forall i\in \mathbb{N}\}=d,\]
where $d$ is the solution of
\[\left(\frac{(a-1)}{\alpha^{\mu}}\right)^{d} -\alpha^{d}+1=0.\]   
We display the dimension-spectrum $d(\mu)$, given by this equation for $\alpha\in\{3/2,2,3\}$, in figure 2.
\begin{figure}
\vspace{0pt}\hspace{0pt}\scalebox{0.42}{\includegraphics{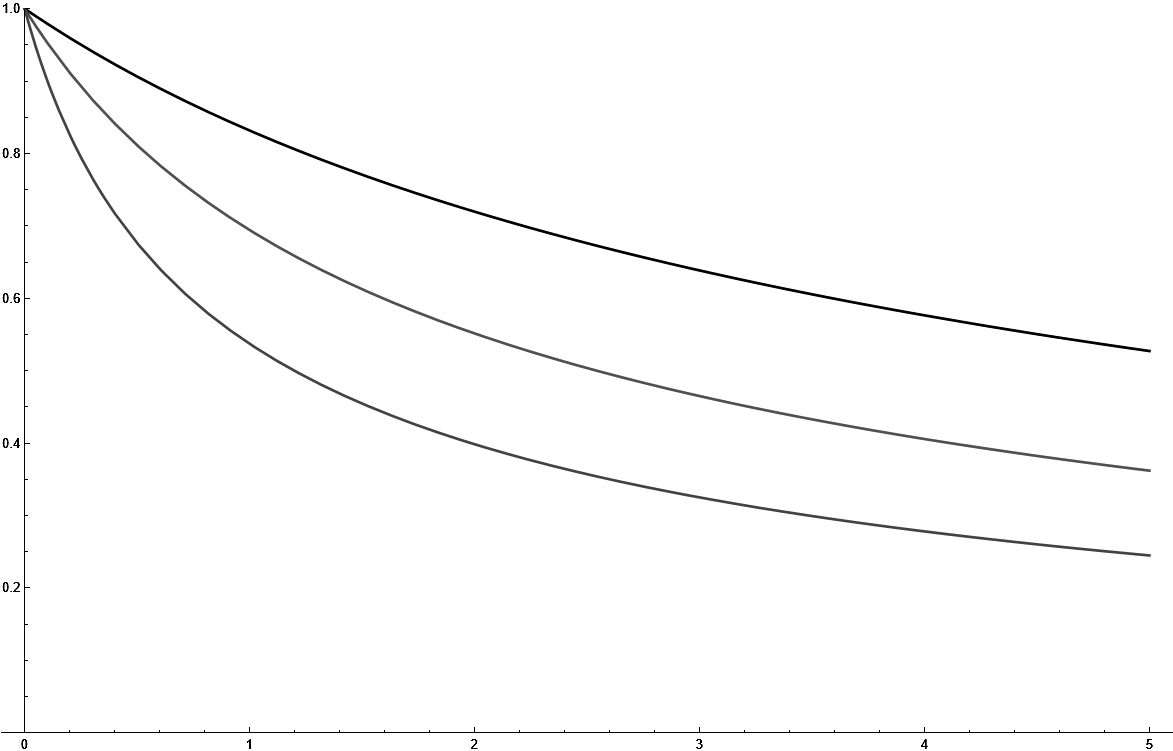}}
\caption{The dimension-spectrum $d(\mu)$ for $\alpha\in\{3/2,2,3\}$ }
\end{figure}     
\section{Proof of theorem 2.1}
We show in this section that the set
\[ A=\{x\in(0,1]|\lim_{i\to\infty}d_{i}(x)=\infty\}\]
has Hausdorff dimension zero. To this end we define a set
\[ A_{M}=\{x\in(0,1]|d_{i}(x)\ge M~\forall i\in\mathbb{N}\}\]
for an integer $M\ge 1$. Obviously $\dim_{H}A_{1}=1$, moreover we obtain:
\begin{proposition}We have
\[\lim_{M\to\infty}\dim_{H}A_{M}=0. \]
\end{proposition}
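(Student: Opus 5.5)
We bound $\dim_H A_M$ from above by a direct covering argument using the cylinder intervals of the iterated function system. The bound comes out as a number $s_M$ determined by a pressure-type equation, and we then show $s_M\to 0$.

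For each $n$, the set $A_M$ is covered by the cylinders $I(d_1,\dots,d_n)$ with all $d_i\ge M$. By the length formula from Section 2, $|I(d_1,\dots,d_n)|=(\alpha-1)^n\alpha^{-(d_1+\dots+d_n)}$. Hence for $s>0$ the $s$-dimensional covering sum factorizes:
\[\sum_{d_1,\dots,d_n\ge M}|I(d_1,\dots,d_n)|^s=\Bigl(\sum_{d\ge M}(\alpha-1)^s\alpha^{-sd}\Bigr)^n=\Bigl(\frac{(\alpha-1)^s\alpha^{-sM}}{1-\alpha^{-s}}\Bigr)^n .\]
The maximal cylinder length at level $n$ is $(\alpha-1)^n\alpha^{-nM}$. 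This tends to $0$ once $M$ is large enough that $\alpha^{M}>\alpha-1$, which already holds for $M\ge1$ when $\alpha<2$, and in general for $M>\log_\alpha(\alpha-1)$. So these are genuine $\delta$-covers. Whenever
\[ g_M(s):=\frac{(\alpha-1)^s\alpha^{-sM}}{1-\alpha^{-s}}<1, \]
the sums tend to $0$ as $n\to\infty$, so $\mathcal H^s(A_M)=0$ and $\dim_H A_M\le s$.

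It remains to show that for every fixed $\varepsilon>0$ we have $g_M(\varepsilon)<1$ for all sufficiently large $M$. With $s=\varepsilon$ fixed, the denominator $1-\alpha^{-\varepsilon}$ is a fixed positive constant, and $(\alpha-1)^\varepsilon$ is fixed. The factor $\alpha^{-\varepsilon M}\to0$ as $M\to\infty$, so $g_M(\varepsilon)\to0$. Hence $\dim_H A_M\le\varepsilon$ for large $M$. Since $A_M$ is decreasing in $M$ and dimensions are nonnegative, $\lim_{M\to\infty}\dim_H A_M=0$.

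The only delicate point is that the first-level sum diverges as $s\to0$ because of the factor $(1-\alpha^{-s})^{-1}\sim 1/(s\log\alpha)$. The dimension bound must therefore come from fixing $s$ first and then letting $M\to\infty$, rather than from solving $g_M(s)=1$ explicitly. The exponential decay in $M$ beats any fixed constant, so this ordering of limits resolves the issue.

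Theorem 2.1 then follows: if $d_i(x)\to\infty$, then $T^{k}(x)\in A_M$ for some $k$. Thus $A\subset\bigcup_k\bigcup_{d_1,\dots,d_k}T_{d_k}\circ\cdots\circ T_{d_1}(A_M)$, a countable union of affine images of $A_M$. Hence $\dim_H A\le\dim_H A_M$ for every $M$, and so $\dim_H A=0$.
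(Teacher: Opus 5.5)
Your proof is correct, but it takes a different route from the paper. The paper cites Theorem 4.1 of the earlier paper [NE]. That is the exact dimension formula for the infinite conformal iterated function system $\{T_i\}_{i\ge M}$, in the spirit of Mauldin--Urba\'nski. It identifies $\dim_H A_M$ as the unique root $D_M\in(0,1]$ of $\sum_{i\ge M}((\alpha-1)\alpha^{-i})^{D}=1$, which is your equation $g_M(D)=1$, and the paper then calls the asymptotics obvious.

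You instead prove only the upper bound, $\dim_H A_M\le s$ whenever $g_M(s)<1$, by a direct covering with level-$n$ cylinders. The covering sum factorizes because the maps are affine. Since the lower bound $0$ is trivial, the upper bound is all that is needed. Your argument is therefore self-contained and avoids the thermodynamic-formalism machinery behind the cited theorem, including the lower-bound half.

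The paper's route gives more, namely the exact value of $\dim_H A_M$. Its ``obvious'' step is precisely your observation, plus monotonicity: $g_M$ is strictly decreasing in $s$, and $g_M(\varepsilon)\to 0$ for each fixed $\varepsilon>0$. Hence $D_M<\varepsilon$ for large $M$.

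One small correction: your caveat requiring $\alpha^M>\alpha-1$ is vacuous. For every $\alpha>1$ and $M\ge 1$ we have $(\alpha-1)\alpha^{-M}\le(\alpha-1)/\alpha<1$, so the level-$n$ covers are $\delta$-covers for all $M$.
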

\proof By theorem 4.1 of \cite{[NE]} for all $M\in\mathbb{N}$ $\dim_{H}A_{M}$ is given by the unique solution $D\in(0,1]$ of the equation
\[ \sum_{i=M}^{\infty}(\frac{\alpha-1}{\alpha^{i}})^{D}=1.\]
The asymptotics for $M\to\infty$ is obvious.  
\qed\\~\\
$\dim_{H}A=0$ now follows from the next proposition. 
\begin{proposition}
For all $M\ge 1$ we have $\dim_{H}A\le\dim_{H}A_{M}$.
\end{proposition}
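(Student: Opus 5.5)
The plan is to write $A$ as a countable union of images of subsets of $A_M$ under finitely many branches of the iterated function system. Then countable stability of Hausdorff dimension, together with the bi-Lipschitz nature of the maps $T_i$, gives the claim. Fix $M\ge 1$. If $x\in A$, then $d_i(x)\to\infty$, so there is an $N$ with $d_i(x)\ge M$ for all $i>N$. Since $T$ shifts the digit sequence, i.e. $d_i(T^N(x))=d_{i+N}(x)$, we get $T^N(x)\in A_M$. Hence
\[ A\subseteq \bigcup_{N=0}^{\infty}\ \bigcup_{(d_1,\dots,d_N)\in\mathbb{N}^N} T_{d_1}\circ\dots\circ T_{d_N}(A_M),\]
where the composition is taken in the order that prepends the digits $d_1,\dots,d_N$. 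From the formula $\langle d_i\rangle=\lim T_{d_n}\circ\dots\circ T_{d_1}(1)$, the composition maps $\langle e_1,e_2,\dots\rangle$ to $\langle d_1,\dots,d_N,e_1,e_2,\dots\rangle$. I will record this identity first. It follows directly from the series representation:
\[\langle d_1,\dots,d_N,e_1,\dots\rangle=\sum_{i=1}^{N}(\alpha-1)^{i-1}\alpha^{-(d_1+\dots+d_i)}+(\alpha-1)^N\alpha^{-(d_1+\dots+d_N)}\langle e_1,\dots\rangle ,\]
and this affine map is exactly the appropriate composition of the $T_i$.

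The index set of this union is countable, because $\bigcup_N\mathbb{N}^N$ is countable. Each map in the union is affine, with contraction ratio $(\alpha-1)^N\alpha^{-(d_1+\dots+d_N)}>0$. It is therefore a similarity and preserves Hausdorff dimension, so every piece has dimension $\dim_H A_M$. By monotonicity and countable stability of Hausdorff dimension,
\[ \dim_H A\le \sup_{N,(d_1,\dots,d_N)}\dim_H\bigl(T_{d_1}\circ\dots\circ T_{d_N}(A_M)\bigr)=\dim_H A_M. \]

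The only step needing care is the bookkeeping of the digit-shift identity. One must check that prepending digits corresponds to applying the contractions in the right order, and that the $\alpha$-expansion is unique on $(0,1]$ (which the paper quotes from \cite{[NE]}), so that $T^N(x)$ genuinely lies in $A_M$. Once the affine formula above is verified, the rest is routine. Combined with Proposition 3.1, letting $M\to\infty$ then yields Theorem 2.1.
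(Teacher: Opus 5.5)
Your proof is correct and is essentially the paper's argument. The paper introduces $A_{M,N}=\{x\mid d_i(x)\ge M\;\forall i>N\}$ and shows $\dim_H A_{M,N}=\dim_H A_M$ by writing it as a countable union of the affine images $T_{s_1}\circ\dots\circ T_{s_N}(A_M)$, then uses $A\subseteq\bigcup_N A_{M,N}$; you merge these two steps into one countable union. Your explicit series check of the prepending identity is worth keeping, because the formula $\lim T_{d_n}\circ\dots\circ T_{d_1}(1)$ you cite from the paper actually lists the digits in reverse order, whereas your composition $T_{d_1}\circ\dots\circ T_{d_N}$ is the correct one.
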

\proof
Fix $M\ge 1$. For all integers $N\ge 1$ we definite
\[ A_{M,N}=\{x\in(0,1]|d_{i}(x)\ge M~\forall i>N\}.\]
We have
\[ A_{M,N}=\bigcup_{s_{1},\dots,s_{N}\in\mathbb{N}}\{x\in(0,1]|d_{i}(x)\ge M~\forall i>N,\quad d_{i}(x)=s_{i}~\forall i\le N\}.\]
For $T_{i}(x)=\frac{\alpha-1}{\alpha^{i}}x+\frac{1}{\alpha^{i}}$  we have
\[ T_{s_{1}}\circ\dots\circ T_{s_{n}}(A_{M})=\{x\in(0,1]|d_{i}(x)\ge M~\forall i>N,\quad d_{i}(x)=s_{i}~\forall i\le N\}.\]
 Since a linear bijection does not change Hausdorff dimension, all these sets have dimension $\dim_{H}A_{M}$. Since Hausdorff dimension is countable stable, we obtain $\dim_{H}A_{M,N}=\dim_{H}A_{M}$.\\
For $x\in A$ we have $\lim_{i\to\infty}d_{i}(x)=\infty$, hence there is a $N$ such that $d_{i}(x)\ge M$ $\forall i>N$, hence $x\in A_{M,N}$ for some $N$. This means
\[ A\subseteq \bigcup_{N=1}^{\infty}A_{M,N}.\]
Again by countable stability the Hausdorff dimension of the union is $\dim_{H}A_{M}$ and $\dim_{H}A\le\dim_{H}A_{M}$ follows by monotony of dimension.
\qed
\section{Proof of theorem 2.2}
For a function $f:(0,1]\to \mathbb{R}$ we define the Brikhoff sum of $f$ with respect to $T$ by
\[ S_{n}f(x)=\sum_{j=0}^{n-1}f(T^{j}(x)),\]
where $T$ is the map that generates the $\alpha$-expansion, see section 2. For a given potential $\phi: (0,1]\to \mathbb{R}$ we define the topological pressure by 
\[ P_{\phi}(t,q)=\lim_{n\to\infty}\frac{1}{n}\log \sum_{(d_{1},\dots,d_{n})\in\mathbb{N}^{n}}\exp\left(\sup_{(d_{1}(x),\dots, d_{n}(x))=(d_{1},\dots,d_{n})}S_{n}(-t\log|T^{\prime}(x)|+q\phi(x))\right).\]
Fix the natural potential $\phi(x)=d_{1}(x)$. In this case we get an explicit expression for the topological pleasure.
\begin{proposition}
We have
\[ P_{\phi}(t,q)=\log\left(\frac{e^{-t\log(\alpha/(\alpha-1))+q}}{1-e^{-t\log(\alpha)+q}}\right).\]
\end{proposition}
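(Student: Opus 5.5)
The plan is to compute the Birkhoff sums exactly on cylinders, which is possible because $T$ is piecewise linear. On the interval $(\alpha^{-i},\alpha^{-(i-1)}]$ the derivative is constant, $T'(x)=\alpha^{i}/(\alpha-1)$. Hence for $x$ with $d_{1}(x)=i$ we have
\[ \log|T'(x)|=i\log\alpha-\log(\alpha-1),\qquad \phi(x)=d_{1}(x)=i. \]
Since $d_{1}(T^{j}(x))=d_{j+1}(x)$, for every $x\in I(d_{1},\dots,d_{n})$ we get
\[ S_{n}\bigl(-t\log|T'|+q\phi\bigr)(x)=\sum_{j=1}^{n}\Bigl(-t\bigl(d_{j}\log\alpha-\log(\alpha-1)\bigr)+q\,d_{j}\Bigr). \]
This expression depends only on the digits $d_{1},\dots,d_{n}$. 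The supremum over the cylinder is therefore attained trivially and equals this value, with no distortion error; this is consistent with the cylinder length formula $|I(d_{1},\dots,d_{n})|=(\alpha-1)^{n}\alpha^{-(d_{1}+\dots+d_{n})}$.

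Next, the sum over $(d_{1},\dots,d_{n})\in\mathbb{N}^{n}$ factorizes as an $n$-th power:
\[ \sum_{(d_{1},\dots,d_{n})}\prod_{j=1}^{n}(\alpha-1)^{t}e^{(q-t\log\alpha)d_{j}}=\Bigl(\sum_{i=1}^{\infty}(\alpha-1)^{t}e^{(q-t\log\alpha)i}\Bigr)^{n}. \]
Taking $\frac1n\log$ removes the limit, so $P_{\phi}(t,q)=\log\sum_{i\ge1}(\alpha-1)^{t}e^{(q-t\log\alpha)i}$.

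Finally, evaluate the geometric series, which converges exactly when $q<t\log\alpha$:
\[ \sum_{i\ge1}e^{(q-t\log\alpha)i}=\frac{e^{q-t\log\alpha}}{1-e^{q-t\log\alpha}}. \]
Multiplying by $(\alpha-1)^{t}=e^{t\log(\alpha-1)}$ combines the numerator into
\[ e^{-t\log\alpha+t\log(\alpha-1)+q}=e^{-t\log(\alpha/(\alpha-1))+q}, \]
which gives the stated formula. For $q\ge t\log\alpha$ the series diverges and $P_{\phi}=\infty$. The only real point of care is this domain of validity, so the identity should be stated for $q<t\log\alpha$. Everything else is bookkeeping; the whole argument rests on the constancy of $T'$ and $\phi$ on first-level cylinders.
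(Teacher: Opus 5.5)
Your proposal is correct and follows the same route as the paper. Both arguments evaluate the Birkhoff sum exactly on each cylinder, using that $T'$ and $\phi$ are constant on first-level cylinders, then factor the sum over $\mathbb{N}^{n}$ into the $n$-th power of a single geometric series. Your remark that the identity holds only for $q<t\log\alpha$, with $P_{\phi}=\infty$ otherwise, is a valid caveat that the paper leaves implicit.
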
  
\proof We have 
\[ P_{\phi}(t,q)=\]
\[ =\lim_{n\to\infty}\frac{1}{n}\log \sum_{(d_{1},\dots,d_{n})\in\mathbb{N}^{n}}\exp\left(\sup_{(d_{1}(x),\dots, d_{n}(x))=(d_{1},\dots,d_{n})}\sum_{j=0}^{n-1}-t\log|T^{\prime}(T^{j}(x))|+qd_{j+1}(x)\right)\]
\[=\lim_{n\to\infty}\frac{1}{n}\log \sum_{(d_{1},\dots,d_{n})\in\mathbb{N}^{n}}\exp\left(\sum_{j=1}^{n}t\log((\alpha-1)/\alpha^{d_{j}})+qd_{j}\right)\]
\[=\lim_{n\to\infty}\frac{1}{n}\log \sum_{(d_{1},\dots,d_{n})\in\mathbb{N}^{n}}\exp\left(nt\log(\alpha-1)+(-t\log(\alpha)+q)\sum_{j=1}^{n}d_{j}\right)\]
\[ =\lim_{n\to\infty}\frac{1}{n}\log \left(\frac{e^{-t\log(\alpha/(\alpha-1))+q}}{1-e^{-t\log(\alpha)+q}}\right)^{n}\]
\[ = \log\left(\frac{e^{-t\log(\alpha/(\alpha-1))+q}}{1-e^{-t\log(\alpha)+q}}\right).\]
\qed~\\~\\
We will use the following corollary of the last proposition, which may be proved by a straightforward calculation
\begin{corollary}
For $\beta>1$ the solution of the system
\[ P(t,q)=q\beta \quad\mbox{ and }\quad  \frac{\partial P}{\partial q}(t,q)=\beta\]
is given by
\[ t(\beta)=\frac{(1-\beta)\log(\beta-1)+\beta \log(\beta)}{-\log(\alpha-1)+\beta \log(\alpha)}\quad\mbox{ and }\quad q(\beta)=t(\beta)\log(\alpha)+\log((\beta-1)/\beta).\]
\end{corollary}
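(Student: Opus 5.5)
The plan is to reduce the system to one equation in the single variable $u=e^{q-t\log\alpha}$, solve it, and then substitute back, which leaves a linear equation for $t$. We work in the region where the pressure of Proposition 4.1 is finite. This is the region where the geometric series converges, i.e.\ $u\in(0,1)$.

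First I would expand the closed form from Proposition 4.1:
\[ P(t,q)=t\log(\alpha-1)-t\log\alpha+q-\log\bigl(1-e^{q-t\log\alpha}\bigr). \]
Differentiating in $q$ gives
\[ \frac{\partial P}{\partial q}(t,q)=1+\frac{u}{1-u}=\frac{1}{1-u}. \]
So the second equation $\partial P/\partial q=\beta$ is equivalent to $u=(\beta-1)/\beta$. For $\beta>1$ this value lies in $(0,1)$, so it is admissible. Taking logarithms gives $q-t\log\alpha=\log((\beta-1)/\beta)$, which is exactly the stated formula $q(\beta)=t(\beta)\log(\alpha)+\log((\beta-1)/\beta)$, once $t(\beta)$ is known.

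Next I would insert $1-u=1/\beta$ into $P$, which gives
\[ P=t\log(\alpha-1)-t\log\alpha+q+\log\beta. \]
The first equation $P=q\beta$ then becomes
\[ t\log(\alpha-1)-t\log\alpha+\log\beta=(\beta-1)q. \]
Substituting the expression for $q$ from the previous step turns this into a linear equation in $t$:
\[ t\bigl(\log(\alpha-1)-\beta\log\alpha\bigr)=(\beta-1)\log(\beta-1)-\beta\log\beta. \]
Dividing by the coefficient yields
\[ t(\beta)=\frac{(1-\beta)\log(\beta-1)+\beta\log\beta}{-\log(\alpha-1)+\beta\log\alpha}. \]

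The only point requiring care is that this division is legitimate, which also gives uniqueness of the solution. For $\beta>1$ we have $\beta\log\alpha>\log\alpha>\log(\alpha-1)$, so the denominator is strictly positive. Since $u$ is determined uniquely by the second equation and $t$ by a linear equation with nonzero coefficient, the pair $(t(\beta),q(\beta))$ is the unique solution. Apart from this check, everything is routine algebra; there is no real obstacle.
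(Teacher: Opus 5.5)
Your proposal is correct and is exactly the ``straightforward calculation'' the paper leaves to the reader. The paper gives no details, and your substitution $u=e^{q-t\log\alpha}$, the reduction of $\partial P/\partial q=\beta$ to $u=(\beta-1)/\beta\in(0,1)$, the resulting linear equation for $t$, and the check that its coefficient $\beta\log\alpha-\log(\alpha-1)$ is positive all hold up and together give existence and uniqueness.
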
 
The family of local inverses of $T_{i}:(0,1]\to(0,1]$ for $i\in\mathbb{N}$ constitute an infinite linear and hence conform iterated function system in the sense of \cite{[MU]}. It is known that there is a Gibbs-measure for such iterated function systems:
\begin{theorem}
There is a unique shift invariant ergodic probability measure $\hat\mu_{t,q}$ on $\mathbb{N}^{N}$, such that the projected measure $\mu_{t,q}= \langle \hat\mu_{t,q}\rangle$ has the Gibbs-property
\[ \frac{1}{C}\le \frac{\mu_{t,q}(I(d_{1},\dots,d_{n}))}{\exp\left(\sum_{j=1}^{n}(t\log((\alpha-1)/\alpha^{d_{j}})+qd_{j})-nP(t,q)\right)}\le C\]
for all $n\in\mathbb{N}$ and $(d_{i})\in\mathbb{N}^\mathbb{N}$. Moreover
\[ \frac{\partial P}{\partial t}(t,q)=-\int\log|T^{\prime}(x)|d\mu_{t,q}\quad\mbox{ and }\quad\frac{\partial P}{\partial q}(t,q)=\int d_{1}(x)d\mu_{t,q}.\]
\end{theorem}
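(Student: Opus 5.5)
The plan is to construct $\hat\mu_{t,q}$ explicitly rather than quote the general thermodynamic formalism of \cite{[MU]}. The potential $t\log((\alpha-1)/\alpha^{d_{1}})+qd_{1}$ depends only on the first digit, and $T$ is affine on each interval $(\alpha^{-i},\alpha^{-(i-1)}]$ with $|T'|=\alpha^{i}/(\alpha-1)$ there. So the Gibbs measure should be a Bernoulli measure. Throughout I assume $(t,q)$ lies in the region where $P(t,q)$ is finite, i.e. $q<t\log\alpha$, so that the geometric series in Proposition 4.1 converges.

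\textbf{Step 1 (construction).} Set
\[ p_{i}=\exp\big(t\log((\alpha-1)/\alpha^{i})+qi-P(t,q)\big),\qquad i\in\mathbb{N}. \]
By the computation in the proof of Proposition 4.1,
\[ \sum_{i\ge1}\exp\big(t\log((\alpha-1)/\alpha^{i})+qi\big)=\frac{e^{-t\log(\alpha/(\alpha-1))+q}}{1-e^{-t\log(\alpha)+q}}=e^{P(t,q)}, \]
so $\sum_i p_i=1$. Define $\hat\mu_{t,q}$ to be the product measure $p^{\otimes\mathbb{N}}$ on $\mathbb{N}^{\mathbb{N}}$. Product measures are shift invariant, and they are ergodic, even mixing, by the Kolmogorov $0$--$1$ law.

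\textbf{Step 2 (Gibbs property).} The coding map $(d_i)\mapsto\langle d_i\rangle$ is a bijection onto $(0,1]$, by the uniqueness of $\alpha$-expansions. The cylinder $I(d_1,\dots,d_n)$ is the image of the symbolic cylinder $[d_1\dots d_n]$. Therefore
\[ \mu_{t,q}(I(d_1,\dots,d_n))=\prod_{j=1}^{n}p_{d_j}=\exp\Big(\sum_{j=1}^{n}\big(t\log((\alpha-1)/\alpha^{d_j})+qd_j\big)-nP(t,q)\Big). \]
This gives the Gibbs inequality with $C=1$.

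\textbf{Step 3 (uniqueness).} Let $\nu$ be another shift invariant ergodic measure whose projection satisfies the Gibbs bounds with some constant $C'$. Cylinders generate the Borel $\sigma$-algebra, so $\nu$ and $\hat\mu_{t,q}$ are mutually absolutely continuous, with densities bounded by $CC'$. Two distinct ergodic invariant measures are mutually singular, hence $\nu=\hat\mu_{t,q}$. I expect the only real care here to be justifying the extension of the cylinder comparison to all Borel sets; I would do this with a monotone class argument.

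\textbf{Step 4 (derivative formulas).} Write $P(t,q)=\log Z(t,q)$ with $Z=\sum_i e^{t\log((\alpha-1)/\alpha^{i})+qi}$. Inside the convergence region the series may be differentiated term by term, because it is locally uniformly dominated by a geometric series. This gives
\[ \partial_q P=\sum_i i\,p_i=\int d_1\,d\mu_{t,q},\qquad \partial_t P=\sum_i \log\big((\alpha-1)/\alpha^{i}\big)\,p_i=-\int\log|T'|\,d\mu_{t,q}, \]
using again that $|T'|=\alpha^{i}/(\alpha-1)$ on the $i$-th interval. Both integrals are finite because $p_i$ decays geometrically.

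The main obstacle is essentially bookkeeping: one must restrict to the parameter region where the pressure is finite. Outside that region no probability Gibbs measure exists, and the theorem should be read as holding for $q<t\log\alpha$. This covers the parameters $(t(\beta),q(\beta))$ of Corollary 4.1, since there $e^{q-t\log\alpha}=(\beta-1)/\beta<1$.
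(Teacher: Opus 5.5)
Your proposal is correct, and it takes a different route from the paper. The paper does not prove this theorem. It notes that the $T_i$ form an infinite affine, hence conformal, IFS and cites the general thermodynamic formalism: section 2 of \cite{[HA]} for existence, uniqueness and the Gibbs property, and Proposition 4.8 of \cite{[FAN]} for the derivative formulas.

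You instead use two facts directly. The potential $t\log((\alpha-1)/\alpha^{d_1})+qd_1$ depends only on the first digit, and the cylinder lengths factor exactly. Together these make the Gibbs state a Bernoulli measure with weights $p_i=(1-e^{s})e^{s(i-1)}$, where $s=q-t\log\alpha$; this is a geometric law. The Gibbs inequality then holds with $C=1$, and the derivative formulas reduce to differentiating the closed form of Proposition 4.1.

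Your route has several advantages:
\begin{itemize}
\item It is self-contained and gives an explicit measure and constant.
\item It exposes the restriction $q<t\log\alpha$ (finite pressure). The paper's statement omits this, yet it is also a summability hypothesis of the results the paper cites. You also correctly check that the parameters of Corollary 4.1 satisfy it, since $e^{s}=(\beta-1)/\beta$.
\item With your explicit measure, the ergodic-theorem step in the proof of Theorem 2.2 becomes just the strong law of large numbers for i.i.d.\ geometric digits of mean $1/(1-e^{s})=\beta$.
\end{itemize}
The paper's citation route buys generality: it would also cover potentials that are not constant on $1$-cylinders, where no product structure is available.

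Two small refinements to your Step 3.
\begin{itemize}
\item With an infinite alphabet, finite unions of cylinders are not closed under complements. Run the monotone class argument on the algebra of sets depending on finitely many coordinates instead. Each such set is a countable disjoint union of cylinders, so the two-sided bound passes to it by countable additivity. Alternatively, use open sets and outer regularity.
\item Ergodicity of the competitor $\nu$ is not actually needed. An invariant $\nu\ll\hat\mu_{t,q}$ with $\hat\mu_{t,q}$ ergodic already forces $\nu=\hat\mu_{t,q}$, because the density $d\nu/d\hat\mu_{t,q}$ is shift-invariant and hence constant.
\end{itemize}
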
  
The proof of the first part of this theorem can be found in section 2 of \cite{[HA]} and the second part of the theorem is proposition 4.8 of \cite{[FAN]}. Now we are prepared for the proof of theorem 2.2.\\\\
{\bf Proof of theorem 2.2} 
For $\beta>1$ choose $(t,q)=(t(\beta),q(\beta))$ according to corollary 4.1. Let $\mu_{t,q}$ be the corresponding measure from theorem 4.1. Since $\mu_{t,q}$ is ergodic with respect to $T$ for almost all $x\in(0,1]$ we have
\[\lim_{n\to\infty}\frac{1}{n}\sum_{i=1}^{n}d_{i}(x)=\lim_{n\to\infty}\frac{1}{n}\sum_{i=0}^{n-1}d_{1}(T^{i}(x))=\]
\[ = \int d_{1}(x)d\mu_{t,q}=\frac{\partial P}{\partial q}(t,q)=\beta.\]
Here we use the second part of theorem 4.1 and corollary 4.1.  It follows that $\mu_{t,q}(M_{\beta})=1$. For $x\in M_{\beta}$ we have
\[\lim_{n\to\infty}\frac{1}{n}\log(|I(d_{1}(x),\dots,d_{n}(x))|)=\lim_{n\to\infty}\frac{1}{n}\log((\alpha-1)^{n}\alpha^{-(d_{1}(x)+\dots+d_{n}(x))})\] 
\[ = \log(\alpha-1)-\beta \log(\alpha). \]
By theorem 4.1 and $P(t,q)=q\beta$, we obtain
\[ \lim_{n\to\infty}\frac{1}{n}\log(\mu_{t,q}(I(d_{1}(x),\dots,d_{n}(x))))= \lim_{n\to\infty}\frac{1}{n}\left(\sum_{j=1}^{n}(t\log((\alpha-1)/\alpha^{d_{j}})+qd_{j})-nP(t,q)\right) \]
\[ = \lim_{n\to\infty}\frac{1}{n}\left( \sum_{j=1}^{n}d_{j}(q-t\log(\alpha))+nt\log(\alpha-1)-nq\beta\right)=t\log(\alpha-1)-\beta t\log(\alpha).\]
Putting these equations together, we have
\[ \lim_{n\to\infty}\frac{\log(|I(d_{1}(x),\dots,d_{n}(x))|)}{\log(\mu_{t,q}(I(d_{1}(x),\dots,d_{n}(x))))}=t.\]
Billingsley lemma, see 1.4.1 in \cite{[BI]}, implies $\dim_{H}M_{\beta}=t=t(\beta)$ and this is theorem 2.2.\qed 
\section{Proof of theorem 2.3}
In this section we will prove theorem 2.3. Let $(m_{i})$ be a strictly increasing sequence of natural numbers such that $I=\{m_{i}|i\in\mathbb{N}\}$ has an infinite complement $\hat I=\mathbb{N}\backslash I$ and let $f:I\to\mathbb{N}$ be a increasing function. We consider the set
\[ B_{M}=B_{M}(I,f)=\{x\in(0,1]|d_{i}(x)=f(i)~\forall i\in I,~ d_{i}(x)\le M~\forall i\in\hat I\}.\]
The digits of $x$ in this set are given by $f(i)$ for an Index $i\in I$ and are bounded by $M$ for other indices.
For $n\in \mathbb{N}$ choose $k(n)$ such that the cardinality of $\hat I\cap\{1,2,\dots,k(n)\}$ is $n$. 
\begin{proposition} If  
\[ \lim_{n\to\infty}\frac{k_{n}}{n}=1\quad\mbox{ and }\quad \lim_{n\to\infty}\frac{1}{n}\sum_{i=1}^{k_{n}-n}f(m_{i})=\mu\]
the Hausdorff dimension $d\in (0,1)$ of $B_{M}$ is given the solution of
\[ \sum_{i=1}^{M}\left(\frac{(\alpha-1)}{\alpha^{i+\mu}}\right)^d=1- \]
\end{proposition}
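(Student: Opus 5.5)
The truncated equation is presumably $\sum_{i=1}^{M}\bigl(\frac{\alpha-1}{\alpha^{i+\mu}}\bigr)^{d}=1$, i.e. the finite-$M$ version of the equation in Theorem 2.3. Letting $M\to\infty$ gives $\bigl(\frac{\alpha-1}{\alpha^{\mu}}\bigr)^{d}/(\alpha^{d}-1)=1$, which is exactly the equation stated there. I prove that $\dim_H B_M=d_M$, the root of this equation. The upper bound comes from a covering argument and the lower bound from a Bernoulli-type measure and Billingsley's lemma, as in Section 4.

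\emph{Bookkeeping.} Fix $n$. In the positions $1,\dots,k_n$ there are $n$ free positions in $\hat I$, with digits in $\{1,\dots,M\}$. There are $k_n-n$ prescribed positions $m_1,\dots,m_{k_n-n}$, with digits $f(m_i)$. Put $S_n=\sum_{i=1}^{k_n-n}f(m_i)$. By the length formula for cylinders, a level-$k_n$ cylinder meeting $B_M$ with free digits $e_1,\dots,e_n$ has length
\[ (\alpha-1)^{k_n-n}\alpha^{\,n\mu-S_n}\prod_{j=1}^{n}\frac{\alpha-1}{\alpha^{e_j+\mu}} . \]
The hypotheses $k_n/n\to1$ and $S_n/n\to\mu$ say that the prefactor is $e^{o(n)}$. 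Moreover $S_{n+1}-S_n=o(n)$. Hence for every level $k$ with $k_n\le k<k_{n+1}$ (only prescribed digits lie strictly between two consecutive free positions) the level-$k$ cylinder also has length $e^{o(n)}$ times the above product.

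\emph{Upper bound.} Cover $B_M$ by the $M^n$ level-$k_n$ cylinders. Then
\[ \sum|I|^{s}=e^{o(n)}\Bigl(\sum_{i=1}^{M}\bigl(\tfrac{\alpha-1}{\alpha^{i+\mu}}\bigr)^{s}\Bigr)^{n}. \]
For $s>d_M$ the bracket is $<1$, so the sum tends to $0$. Since $|I|\to0$, this gives $\dim_H B_M\le d_M$.

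\emph{Lower bound.} Let $\nu$ be the image under $\langle\cdot\rangle$ of the product measure that puts the digit $f(m_i)$ at position $m_i$ deterministically. At free positions it chooses the digit $i\in\{1,\dots,M\}$ independently with probability $p_i=\bigl(\frac{\alpha-1}{\alpha^{i+\mu}}\bigr)^{d_M}$; these sum to $1$ by the choice of $d_M$. Then $\nu(B_M)=1$. For every $x\in B_M$ and $k_n\le k<k_{n+1}$ we get $\nu(I_k(x))=\prod_{j\le n}p_{e_j}=\bigl(|I_k(x)|e^{o(n)}\bigr)^{d_M}$. Since $|I_k(x)|\le(\alpha-1)^{n}\alpha^{-n-n\mu}e^{o(n)}$ decays exponentially in $n$, this yields $\lim_k\log\nu(I_k(x))/\log|I_k(x)|=d_M$. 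Billingsley's lemma (1.4.1 in \cite{[BI]}) applied to the nested partitions into cylinder intervals then gives $\dim_H B_M\ge d_M$.

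\emph{Main obstacle.} Billingsley's lemma is formulated for a fixed sequence of partitions of the line, and here the prescribed digits $f(m_i)\to\infty$. Neighbouring cylinders may therefore be of very different size, and a ball of radius $|I_k(x)|$ need not be covered by boundedly many cylinders of comparable measure. I would first try to verify directly that every interval $J$ with $|I_{k+1}(x)|\le|J|<|I_k(x)|$ meets at most two level-$k$ cylinders that carry $\nu$-mass. Such cylinders are contiguous, sit inside the same parent, and have free digits $\le M$, so their lengths differ by at most a factor $\alpha^{M}$. This would give $\nu(J)\lesssim|J|^{d_M-\varepsilon}$ up to $e^{o(n)}$ factors, and the mass distribution principle then finishes the lower bound. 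The $o(n)$ control of $S_{n+1}-S_n$ is exactly what makes the intermediate levels between $k_n$ and $k_{n+1}$ harmless here.
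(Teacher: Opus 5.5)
Your argument follows the paper's proof. You use the same product measure, with weights $p_i=\bigl(\frac{\alpha-1}{\alpha^{i+\mu}}\bigr)^{d}$ at free positions and point masses at prescribed ones. You compare $\log\nu$ with $\log|\cdot|$ on the same cylinders, and you finish with Billingsley's lemma. The paper applies that lemma directly to the cylinders $\hat I(d_1,\dots,d_n)$ at the levels $k_n$ and says nothing more. Three things you do are missing from the paper and are needed: the covering upper bound, the control of the levels $k_n\le k<k_{n+1}$ through $S_{n+1}-S_n=o(n)$, and the observation that the lemma cannot be applied to these non-uniform cylinder partitions without a regularity check.

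The step that fails as written is the check you propose in your last paragraph. It is not true that an interval $J\ni x$ with $|I_{k+1}(x)|\le|J|<|I_k(x)|$ meets at most two mass-carrying level-$k$ cylinders inside one parent. Here is a counterexample:
\begin{itemize}
\item Take $\alpha=2$, let positions $k-1,\dots,k+L$ be free, and set $d_{k-1}(x)=2$ and $d_k(x)=\dots=d_{k+L}(x)=1$.
\item Then $x$ lies within $2^{-L}|I_k(x)|$ of the right endpoint of $I_k(x)$. That point is also the left endpoint of the sibling $P'=I(d_1,\dots,d_{k-2},1)$, and $|I_k(x)|=|P'|/4$.
\item The child $P'_j$ occupies the relative position $(2^{-j},2^{1-j}]$ inside $P'$. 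So for $L\ge 7$, an interval of length $0.99\,|I_k(x)|$ starting just left of $x$ contains every $P'_j$ with $4\le j\le M$.
\end{itemize}
These are many mass-carrying cylinders in a different level-$(k-1)$ parent. The masses happen to be comparable in this example. Rescuing your claim in general, however, needs a case analysis of how many levels up $J$ can cross cylinder boundaries.

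The estimate you already isolated gives a cleaner fix. For small $r$, let $\mathcal C_r$ be the set of mass-carrying cylinders $I$ with $|I|\le r<|I^{*}|$, where $I^{*}$ is the parent of $I$. These cylinders are pairwise disjoint and cover $B_M$. The estimate works as follows:
\begin{itemize}
\item Along $B_M$, every one-step ratio $|I|/|I^{*}|$ equals $(\alpha-1)\alpha^{-d}$. Here $d\le M$ at a free position, and $d=f(m_i)\le S_{n+1}-S_n=o(n)$ at a prescribed one.
\item On $\mathcal C_r$ one has $n\asymp\log(1/r)$, so $|I|\ge r^{1+o(1)}$ for every $I\in\mathcal C_r$.
\item Hence at most $r^{-o(1)}$ elements of $\mathcal C_r$ meet $B(x,r)$. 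Each has mass $\nu(I)=(|I|e^{o(n)})^{d_M}\le r^{d_M-o(1)}$.
\end{itemize}
Therefore $\nu(B(x,r))\le r^{d_M-o(1)}$ uniformly in $x$. The mass distribution principle then gives $\dim_H B_M\ge d_M$ with no neighbour analysis at all.
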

\proof For a sequence $(d_{i})\in\mathbb{N}^{n}$ we define
 \[  \hat I(d_{1},\dots,d_{n})=\]
 \[ \{x\in(0,1]|d_{i}(x)=d_{i}\forall i\in(\mathbb{N}\backslash I)\cap\{1,2,\dots,k(n)\},~d_{i}(x)=f(i)\forall i\in I\cap\{1,2,\dots,k(n) \}\}.\]
We define a probability measure $\mu$ on $B_{M}$, by
\[\mu(\{x\in(0,1]|d_{i}(x)=j\})=\left(\frac{(\alpha-1)}{\alpha^{j+\mu}}\right)^{d}\] for $i\in I\backslash\mathbb{N}$ and \[\mu(\{x\in(0,1]|d_{m_{i}}(x)=f(i)\})=1,\quad \mu(\{x\in(0,1]|d_{m_{i}}(x)\not=f(i)\})=0 \] for $m_{i}\in I$.
The measure of the intervals $\hat I$ is given by
\[ \mu(\hat I(d_{1},\dots,d_{n}))=\prod_{i=1}^{n} \left(\frac{(\alpha-1)}{\alpha^{d_{i}+\mu}}\right)^{d}\]
and the length of the intervals is given by
\[ |\hat I(d_{1},\dots,d_{n})|=\prod_{i=1}^{n}\frac{\alpha-1}{\alpha^{d_{i}}}\prod_{i=1}^{k_{n}-n} \frac{\alpha-1}{\alpha^{f(m_{i})}}.\]
Hence
\[ \lim_{n\to\infty}\frac{\log(\mu(\hat I(d_{1},\dots,d_{n})))}{\log(|\hat I(d_{1},\dots,d_{n})|)}\]
\[ =d\lim_{n\to\infty}\frac{n\log(\alpha-1)- (n\mu+\sum_{i=1}^{n}d_{i})\log(\alpha)}{k_{n}\log(\alpha-1)-(\sum_{i=1}^{k_{n}-n}f(m_{i})+\sum_{i=1}^{n}d_{i})\log(\alpha)}=1\]
by our assumptions. For every $x\in B_{M}$ there is a sequence $(d_{i})\in\mathbb{N}^{\mathbb{N}}$ such that $x\in\hat I(d_{1},\dots,d_{n})$ for all $n\in\mathbb{N}$ hence the result follows from Billingsley lemma, see lemma 1.4.1 in \cite{[BI]}.  \qed\\
The following corollary of proposition 5.1 is straightforward.
\begin{corollary}
Under the assumption of proposition 5.1, we have $\lim_{M\to\infty}\dim_{H}B_{M}=d$, where $d\in(0,1)$ is given by
\[\left(\frac{(a-1)}{\alpha^{\mu}}\right)^{d} -\alpha^{d}+1=0.\] 
\end{corollary}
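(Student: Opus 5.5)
Plan: obtain the corollary by letting $M\to\infty$ in the equation of Proposition 5.1 and checking that its solution converges to the root of the stated equation. Proposition 5.1 (its right-hand side is truncated in the statement; I read it as $\sum_{i=1}^{M}((\alpha-1)/\alpha^{i+\mu})^{d}=1$) gives, for each $M$, $\dim_H B_M=d_M\in(0,1)$ as the unique root of
\[ G_M(s)=\sum_{i=1}^{M}\left(\frac{\alpha-1}{\alpha^{i+\mu}}\right)^{s}=1 .\]

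First I note that each $G_M$ is continuous and strictly decreasing in $s$, and $G_M\le G_{M+1}$ pointwise. Since $G_{M+1}(d_M)\ge G_M(d_M)=1$ and $G_{M+1}$ is decreasing, its root lies to the right: $d_M\le d_{M+1}$. The sequence $(d_M)$ is monotone and bounded by $1$, so $d^*=\lim_M d_M$ exists.

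Next I compute the limit function for $s>0$:
\[ G(s)=\lim_{M\to\infty}G_M(s)=\left(\frac{\alpha-1}{\alpha^{\mu}}\right)^{s}\sum_{i=1}^{\infty}\alpha^{-is}=\left(\frac{\alpha-1}{\alpha^{\mu}}\right)^{s}\frac{1}{\alpha^{s}-1}. \]
So $G(s)=1$ is equivalent to $((\alpha-1)/\alpha^{\mu})^{s}-\alpha^{s}+1=0$, which is the stated equation (with $a=\alpha$). The root is unique in $(0,1)$ because $G$ is strictly decreasing with $G(s)\to\infty$ as $s\to0^+$. Also $G(1)=(\alpha-1)\alpha^{-\mu}/(\alpha-1)=\alpha^{-\mu}<1$ for $\mu>0$, so the root $d$ lies in $(0,1)$.

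The main step is to identify $d^*=d$, using monotone convergence plus uniform control on compact sets.
\begin{itemize}
\item Since $G_M\le G$, we get $G(d_M)\ge 1$, hence $d_M\le d$ and so $d^*\le d$.
\item Conversely, for any $s<d$ we have $G(s)>1$, so $G_M(s)>1$ for large $M$, which gives $d_M>s$. Hence $d^*\ge d$.
\end{itemize}
This step needs $d>0$ and no degeneracy when $\mu=0$. The $\mu=0$ case gives $G(1)=1$ and $d=1$, so I would assume $\mu>0$, which is implicit in $d\in(0,1)$. Combining the two bounds yields $\lim_{M\to\infty}\dim_H B_M=d$.
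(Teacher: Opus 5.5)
Your proof is correct and follows the paper's route. You read Proposition 5.1 with right-hand side $1$, let $M\to\infty$ in $\sum_{i=1}^{M}((\alpha-1)/\alpha^{i+\mu})^{d_M}=1$, and sum the geometric series to get $((\alpha-1)/\alpha^{\mu})^{d}=\alpha^{d}-1$. Beyond that, you also supply what the paper's one-line ``taking the limit'' skips: that $d_M$ is monotone and that its limit is the root of the limiting equation (via $G_M\le G$ and pointwise convergence), and that $d\in(0,1)$ needs $\mu>0$.
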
 
\proof
Let $\dim_{H}B_{M}=d_{M}$ by the last proposition, we have
\[ \sum_{i=1}^{M}\left(\frac{(\alpha-1)}{\alpha^{i+\mu}}\right)^{d_{M}}=1. \]
Taking the limit $M\to\infty$ and using the geometric series, gives
\[ \frac{((a-1)a^{-m-1})^d}{1-a^{-d}}=1,\]
which is equivalent to the expression given in the corollary. 
\qed

\end{document}